\renewcommand*\subjclass[2][2000]{%
  \def\@subjclass{#2}%
  \@ifundefined{subjclassname@#1}{%
    \ClassWarning{\@classname}{Unknown edition (#1) of Mathematics
      Subject Classification; using '1991'.}%
  }{%
    \@xp\let\@xp\subjclassname\csname subjclassname@#1\endcsname
  }%
}
\newtheorem{theorem}{Theorem}[section]
\newtheorem{lemma}[theorem]{Lemma}
\newtheorem*{lemma*}{Lemma}
\theoremstyle{definition}
\theoremstyle{remark}
\newtheorem{remark}[theorem]{Remark}
\numberwithin{equation}{section}
\def\XXint#1#2#3{{\setbox0=\hbox{$#1{#2#3}{\int}$}
\vcenter{\hbox{$#2#3$}}\kern-.5\wd0}}
\def\le{\leqslant}
\def\ge{\geqslant}
\begin{document}

\title{Heinz-Schwarz inequalities for harmonic mappings in the unit ball}

\author[Kalaj]{David Kalaj}
\address{University of Montenegro, Faculty of Natural Sciences and
Mathematics, Cetinjski put b.b. 81000 Podgorica, Montenegro}
\email{davidkalaj@gmail.com}

\keywords{Harmonic mappings, Heinz inequality, Schwarz inequality}

 \subjclass{Primary 31A05;
Secondary 42B30 }
\keywords{Harmonic mappings, Heinz inequality}

\maketitle

\makeatletter\def\thefootnote{\@arabic\c@footnote}\makeatother

\begin{abstract}
We first prove the following generalization of Schwarz lemma for harmonic mappings. Let $u$ be a harmonic mapping of the unit ball onto itself. Then we prove the inequality $\|u(x)-(1-\|x\|^2)/(1+\|x\|^2)^{n/2} u(0)\|\le U(|x| N)$. By using the Schwarz lemma for harmonic mappings  we derive Heinz inequality on the boundary of the unit ball by providing a sharp constant $C_n$ in the inequality: $\|\partial_r u(r\eta)\|_{r=1}\ge C_n$, $\|\eta\|=1$, for every harmonic mapping of the unit ball into itself satisfying the condition $u(0)=0$, $\|u(\eta)\|=1$.
\end{abstract}

\maketitle

\section{Introduction}\label{intsec}
E. Heinz in his classical paper \cite{heinz}, obtained the following result:  If $u$ is a harmonic diffeomorphism of the unit disk $\mathbf{U}$ onto itself satisfying the condition $u(0)=0$, then $$|u_x(z)|^2+|u_y(z)|^2\ge \frac{2}{\pi^2},\, \, z\in \mathbf{U}.$$ The proof uses the following representation of harmonic mappings in the unit disk \begin{equation}\label{repres} u(z) = f(z)+\overline{g(z)},\end{equation} where $f$ and $g$ are holomorphic functions with $|g'(z)|<|f'(z)|$.  It uses the maximum principle for holomorphic functions and the following sharp inequality \begin{equation}\label{heinz}\liminf_{r\to 1^{-}}\left|\frac{\partial u(re^{it})}{\partial r}\right|\ge \frac{2}{\pi}\end{equation} proved by using the Schwarz lemma for harmonic functions. The aim of this paper is to generalize inequality \eqref{heinz} for several dimensional case.

If $u$ is a harmonic mapping of the unit ball onto itself, then we do not have any representation of $u$ as in \eqref{repres}.

It is well known
that a harmonic function (and a mapping)  $u\in L^\infty ({B}^n)$, where $B=B^n$ is the unit ball with the boundary $S=S^{n-1}$,  has the following integral representation
\begin{equation}\label{poi}u(x)=\mathcal{P}[f](x)=\int_{S^{n-1}}P(x,\zeta)f(\zeta)d\sigma(\zeta),\end{equation}
where
$$P(x,\zeta)=\frac{1-\|x\|^2}{\|x-\zeta\|^n}, \zeta\in S^{n-1} $$
is Poisson kernel and $\sigma$ is the unique
normalized rotation invariant Borel measure on $S^{n-1}$ and $\|\cdot \|$ is the Euclidean norm.

We have the following Schwarz lemma for harmonic mappings on the unit ball $B^n$ (see e.g. \cite{ABR}). If $u$ is a harmonic mapping of the unit ball into itself such that $u(0)=0$ then  \begin{equation}\label{shar}\|u(x)\|\le U(rN),\end{equation} where $r=\|x\|$, $N=(0,\dots,0,1)$ and $U$ is a harmonic function of the unit ball into $[-1,1]$ defined by \begin{equation}\label{uext}U(x)= \mathcal{P}[\chi_{S^+}-\chi_{S^-}](x),\end{equation} where $\chi$ is the indicator function and $S^+=\{x\in S: x_n \ge 0\},$ $S^-=\{x\in S: x_n \le 0\}.$  Note that, the standard harmonic Schwarz lemma is formulated for real functions only, but we can reduce the previous statement to the standard one by taking $v(x) = \left<u(x), \eta\right>$, for some $\|\eta\|=1$, where $\left<\cdot, \cdot\right>$ is the Euclidean inner product. Indeed, we will prove a certain generalization of \eqref{shar} without the a priory condition $u(0)=0$ (Theorem~\ref{scp}).   For Schwarz lemma for the derivatives of harmonic mappings on the plane and space we refer to the papers \cite{kh,kp}. It is worth to mention here a certain generalization of \eqref{heinz} for the mappings which are solution of certain elliptic partial differential equations in the plane \cite{chenvuo}. For certain boundary Schwarz lemma on the unit ball for holomorphic mappings in $\mathbf{C}^n$ we refer to the paper \cite{liu}.

 By using Hopf theorem it can be proved (\cite{pacific}) that if $u$ is a harmonic mapping of the unit ball onto itself such that $u(0)=0$ and $\|u(\zeta)\|=1$, then  $$\liminf_{r\to 1}\left\|\frac{\partial u}{\partial r}(r\zeta)\right\|\ge C_n,$$ where $C_n$ is a certain positive constant. Our goal is to find the largest constant $C_n$. This is done in Theorem~\ref{Theo1} and Theorem~\ref{Theo2}.
\section{Preliminaries and main results}
First we prove the following generalization of harmonic Schwarz lemma for $B^n$, $n\ge 3$. The case $n=2$ has been treated and proved by Pavlovic   \cite[Theorem~3.6.1]{pavlovic}.
\begin{theorem}\label{scp}
Let $u$ be a harmonic mapping of the unit ball onto itself, then \begin{equation}\label{geom}\left\|u(x)-\frac{1-\|x\|^2}{(1+\|x\|^2)^{n/2}}u(0)\right\|\le U(\|x\|N).\end{equation}
\end{theorem}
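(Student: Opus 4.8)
The plan is to reduce the vector-valued inequality to a one-dimensional one by testing against unit vectors, to represent the resulting scalar harmonic function through its Poisson integral, and then to exploit the fact that the weight $c(x):=(1-\|x\|^2)/(1+\|x\|^2)^{n/2}$ is chosen precisely so that $P(x,\zeta)-c(x)$ changes sign exactly across the equator.

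First I would set $w(x)=u(x)-c(x)u(0)\in\R^n$ and use $\|w(x)\|=\sup_{\|\eta\|=1}\langle w(x),\eta\rangle$, so that it suffices to bound $\langle w(x),\eta\rangle=v(x)-c(x)v(0)$ for each fixed unit vector $\eta$, where $v:=\langle u(\cdot),\eta\rangle$ is harmonic and satisfies $|v|\le1$ because $u$ maps into $B^n$. By the representation \eqref{poi}, $v=\mathcal P[f_\eta]$ with scalar boundary data $f_\eta:=\langle f,\eta\rangle$, $|f_\eta|\le1$ a.e., and $v(0)=\int_S f_\eta\,d\sigma$ since $P(0,\zeta)\equiv1$. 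Hence
\begin{equation*}
v(x)-c(x)v(0)=\int_S\bigl(P(x,\zeta)-c(x)\bigr)f_\eta(\zeta)\,d\sigma(\zeta)\le\int_S\bigl|P(x,\zeta)-c(x)\bigr|\,d\sigma(\zeta),
\end{equation*}
which reduces everything to showing that the last integral equals $U(\|x\|N)$.

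To evaluate it I would use the rotation invariance of $\sigma$ and the radial dependence of $c$ to assume $x=rN$, $r=\|x\|$. Since $\|rN-\zeta\|^2=1-2r\zeta_n+r^2$, one gets $P(rN,\zeta)=(1-r^2)/(1-2r\zeta_n+r^2)^{n/2}$, so $P(rN,\zeta)=c(rN)$ precisely when $\zeta_n=0$, with $P>c$ for $\zeta_n>0$ and $P<c$ for $\zeta_n<0$; that is, $\sign\bigl(P(rN,\zeta)-c(rN)\bigr)=\sign(\zeta_n)$. Because $\sigma(S^+)=\sigma(S^-)$ the contribution of $c$ then integrates to zero, leaving
\begin{equation*}
\int_S\bigl|P(rN,\zeta)-c(rN)\bigr|\,d\sigma(\zeta)=\int_S\sign(\zeta_n)\,P(rN,\zeta)\,d\sigma(\zeta)=\mathcal P[\chi_{S^+}-\chi_{S^-}](rN)=U(rN).
\end{equation*}
Combining the two displays and taking the supremum over $\eta$ would then give \eqref{geom}.

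The routine ingredients here are the Poisson representation and the scalarization through $\eta$; I expect the only genuine subtlety to be the sign computation in the last paragraph, which is where the precise form of $c(x)$ is forced. The point is that $c(x)$ is exactly the value that $P(rN,\zeta)$ takes on the equator $\zeta_n=0$, so that $\sign(P-c)=\sign(\zeta_n)$ and the worst-case bound $\int_S|P-c|\,d\sigma$ collapses onto the very boundary data $\chi_{S^+}-\chi_{S^-}$ that defines $U$. If anything is delicate, it will be justifying the existence of the $L^\infty$ boundary data $f$ and the validity of \eqref{poi} for the bounded harmonic coordinates of $u$, rather than any of the explicit estimates.
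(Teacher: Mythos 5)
Your proposal is correct and follows essentially the same route as the paper: reduce to $x=rN$ by rotation invariance, write $u(x)-c(x)u(0)$ as a Poisson integral against the kernel $P(x,\zeta)-c(x)$, bound by $\int_S|P-c|\,d\sigma$ using $\|f\|\le1$, and identify this with $U(rN)$ via the observation that $P(rN,\zeta)-c(rN)$ has the sign of $\zeta_n$. The only cosmetic difference is that you scalarize through $\sup_{\|\eta\|=1}\langle w,\eta\rangle$ where the paper applies the vector-valued triangle inequality directly, and you spell out the cancellation $\sigma(S^+)=\sigma(S^-)$ that the paper leaves implicit.
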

\begin{proof}
Assume first that $x=rN$. We have that
$$u(rN)= \int_{S^{n-1}} \frac{1-r^2}{\|\zeta-rN\|^n}f(\zeta)d\sigma(\zeta), $$ and so $$u(r N)-\frac{1-r^2}{(1+r^2)^{n/2}}u(0)=\int_{S^{n-1}} \left(\frac{1-r^2}{\|\zeta-rN\|^n}-\frac{1-r^2}{(1+r^2)^{n/2}}\right)f(\zeta)d\sigma(\zeta).$$
Further we have \[\begin{split}\|u(r N)-\frac{1-r^2}{(1+r^2)^{n/2}}u(0)\|&\le  \int_{S^{n-1}} \left|\frac{1-r^2}{\|\zeta-rN\|^n}-\frac{1-r^2}{(1+r^2)^{n/2}}\right|d\sigma(\zeta)
\\&=\int_{S^{+}} \left(\frac{1-r^2}{\|\zeta-rN\|^n}-\frac{1-r^2}{(1+r^2)^{n/2}}\right)d\sigma(\zeta)\\&+\int_{S^{-}} \left(\frac{1-r^2}{(1+r^2)^{n/2}}-\frac{1-r^2}{\|\zeta-rN\|^n}\right)d\sigma(\zeta).\end{split}\]

Thus  $$\left\|u(r N)-\frac{1-r^2}{(1+r^2)^{n/2}}u(0)\right\|\le U(rN).$$ Now if $x$ is not on the ray $[0,N]$, we choose a unitary transformation $O$ such that $O (N)=x/|x|$. Then we make use of harmonic mapping $v(y) = u(O(y))$ for which we have $v(rN)=u(O(rN))= u(x)$. By making use of the previous proof we obtain \eqref{geom}.
\end{proof}
\subsection{Hypergeometric functions}
In order to formulate and to prove our next results recall the basic definition of hypergeometric functions. For two positive integers $p$ and $q$ and vectors $a=(a_1,\dots, a_p)$ and $b=(b_1,\dots, b_q)$ we set $${_pF_q}[a;b,x] = \sum_{k=0}^\infty \frac{(a_1)_k\cdots (a_p)_k}{(b_1)_k \cdots (b_q)_k \cdot k!} x^k,$$ where $(y)_k: =\frac{\Gamma(y+k)}{\Gamma(y)}= y(y+1)\dots (y+k-1)$ is the Pochhammer symbol. The hypergeometric series converges at least for $|x|<1$. For basic properties and formulas concerning trigonometric series we refer to the book \cite{hyper}.
 The most important step in the proof of our main results i.e. of Theorem~\ref{Theo1} and Theorem~\ref{Theo2} below, is the following lemma
\begin{lemma}\label{granit} The function $V(r)= \frac{\partial U(rN)}{\partial r}$, $0\le r\le 1$ is decreasing on the interval $[0,1]$ and   we have
$$V(r)\ge V(1)=C_n:=\frac{  n! \left(1+n-(n-2)\, _2\mathrm{F}_1\left[\frac{1}{2},1,\frac{3+n}{2},-1\right]\right)}{2^{3 n/2} \Gamma\left[\frac{1+n}{2}\right] \Gamma\left[\frac{3+n}{2}\right]}.$$
\end{lemma}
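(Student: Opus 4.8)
The plan is to reduce the radial profile $U(rN)$ to a one–variable integral and then to an explicit special function of $r$, after which both assertions become short. Since the boundary datum $\chi_{S^+}-\chi_{S^-}=\sign(\zeta_n)$ is zonal, rotational invariance of $\sigma$ lets me write everything in $t=\zeta_n=\cos\theta$. With $\lambda=(n-2)/2$ (positive exactly because $n\ge3$) and $c_n=\Gamma(n/2)/(\sqrt\pi\,\Gamma((n-1)/2))$ the normalizing constant of the zonal measure, I obtain
\[
U(rN)=c_n\int_{-1}^{1}\frac{1-r^2}{(1-2rt+r^2)^{\lambda+1}}\,\sign(t)\,(1-t^2)^{\lambda-\frac12}\,dt .
\]
By oddness in $t$ only the part over $[0,1]$ (a difference of two Poisson–type kernels) survives.

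Next I would expand the kernel via the Gegenbauer generating function $(1-2rt+r^2)^{-\lambda}=\sum_k C_k^\lambda(t)r^k$, which gives $\tfrac{1-r^2}{(1-2rt+r^2)^{\lambda+1}}=\sum_k\tfrac{k+\lambda}{\lambda}C_k^\lambda(t)r^k$. Since $\sign(t)C_k^\lambda(t)(1-t^2)^{\lambda-1/2}$ is even only for odd $k$, only odd powers survive and $U(rN)=\sum_{m\ge0}b_m r^{2m+1}$ with $b_m=\tfrac{2c_n(2m+1+\lambda)}{\lambda}\int_0^1 C_{2m+1}^\lambda(t)(1-t^2)^{\lambda-1/2}dt$. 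The crucial computation is the Gegenbauer moment, which term-by-term integration against the Beta weight should collapse to the clean form $\int_0^1 C_{2m+1}^\lambda(t)(1-t^2)^{\lambda-1/2}dt=\tfrac{(-1)^m2(\lambda)_{m+1}}{m!\,(2m+1)(2m+2\lambda+1)}$. Differentiating the series, using $(\lambda)_{m+1}/\lambda=(\lambda+1)_m$ and $\tfrac{2m+1+\lambda}{2m+2\lambda+1}=1-\tfrac{\lambda}{2m+2\lambda+1}$, splits $V(r)=\partial_r U(rN)$ into a binomial series and a Gauss series, yielding the closed form
\[
V(r)=4c_n(1+r^2)^{-(\lambda+1)}-2c_n\lambda\int_0^1 t^{\lambda-\frac12}(1+r^2t)^{-(\lambda+1)}\,dt ,
\]
the integral being $\tfrac{1}{\lambda+1/2}\,{}_2F_1[\lambda+1,\lambda+\tfrac12;\lambda+\tfrac32;-r^2]$. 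A welcome by-product is that the anticipated boundary difficulty evaporates: this representation is smooth up to $r=1$, so $V(1)$ is simply its value there and, by Abel's theorem, equals $\lim_{r\to1^-}\partial_r U(rN)$.

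Monotonicity then reduces to an elementary estimate. Differentiating the closed form gives $V'(r)=4c_n(\lambda+1)r\bigl[\lambda\int_0^1 t^{\lambda+\frac12}(1+r^2t)^{-(\lambda+2)}dt-2(1+r^2)^{-(\lambda+2)}\bigr]$, so it suffices to show the bracket is $\le0$ on $[0,1]$. Bounding $t^{\lambda+1/2}\le t^{\lambda}$, using the exact antiderivative identity $\int_0^1 t^{\lambda}(1+r^2t)^{-(\lambda+2)}dt=\tfrac{1}{\lambda+1}(1+r^2)^{-(\lambda+1)}$, and finally $1+r^2\le 2\le\tfrac{2(\lambda+1)}{\lambda}$, I get $\lambda\int_0^1 t^{\lambda+1/2}(1+r^2t)^{-(\lambda+2)}dt\le\tfrac{\lambda}{\lambda+1}(1+r^2)^{-(\lambda+1)}\le 2(1+r^2)^{-(\lambda+2)}$. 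Hence $V'\le0$, so $V$ is decreasing and $V(r)\ge V(1)$.

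Finally I would set $r=1$, giving $V(1)=4c_n2^{-(\lambda+1)}-\tfrac{2c_n\lambda}{\lambda+1/2}\,{}_2F_1[\lambda+1,\lambda+\tfrac12;\lambda+\tfrac32;-1]$, and the remaining task — the step I expect to be most delicate — is to match this with the stated $C_n$. This is pure special-function bookkeeping: Euler's transformation rewrites ${}_2F_1[\tfrac n2,\tfrac{n-1}2;\tfrac{n+1}2;-1]$ with upper parameters $\tfrac12,1$, and a contiguous relation shifts the lower parameter to $\tfrac{n+3}2$; regrouping the $\Gamma$-factors and powers of $2$ (via the duplication formula) produces exactly $\tfrac{n!\,(1+n-(n-2)\,{}_2F_1[\frac12,1;\frac{n+3}2;-1])}{2^{3n/2}\Gamma(\frac{n+1}2)\Gamma(\frac{n+3}2)}$, the polynomial remainder accounting for the $1+n$ and $-(n-2)$ terms. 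The two places needing genuine care are the evaluation of the Gegenbauer moment and this closing hypergeometric identity; as a safeguard against a stray constant I would verify the normalization in a low dimension, e.g.\ $n=4$, where the formula collapses to $V(1)=\tfrac4\pi-1$.
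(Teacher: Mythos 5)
Your proof is correct in substance and reaches the right constant, but it travels a genuinely different road from the paper's. The paper expands the difference of Poisson kernels in powers of $P=2r/(1+r^2)$, evaluates the resulting Beta integrals, passes through a ${}_3F_2$ and a quadratic transformation to a ${}_4F_3$, and resums the Taylor coefficients to get $V(r)=\frac{\Gamma[1+\frac n2]}{\sqrt\pi\,\Gamma[\frac{3+n}2]}(1+r^2)^{-n/2}\bigl(1+n-(n-2)r^2\,{}_2F_1[\tfrac12,1,\tfrac{3+n}2,-r^2]\bigr)$; monotonicity is then proved by pairing consecutive terms of the alternating series for ${}_2F_1[\tfrac12,2,\tfrac{3+n}2,-y]$ to show $y\,{}_2F_1[\tfrac12,1,\tfrac{3+n}2,-y]$ increases, so that $V$ is a product of decreasing factors. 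You instead expand the Poisson kernel in Gegenbauer polynomials, compute the odd Gegenbauer moments of $\sign(t)$ (your formula checks out: it reduces to the classical Legendre value at $\lambda=\tfrac12$ and I verified $m=0,1$ for general $\lambda$), and resum to the Euler-integral form $V(r)=4c_n(1+r^2)^{-(\lambda+1)}-2c_n\lambda\int_0^1t^{\lambda-1/2}(1+r^2t)^{-(\lambda+1)}dt$, which agrees with the paper's closed form (I checked $n=3$ identically and $V(1)$ for $n=3,4$). What your route buys is a monotonicity proof that is entirely elementary — differentiation under the integral, the exact antiderivative $\int_0^1t^\lambda(1+r^2t)^{-(\lambda+2)}dt=\frac{1}{\lambda+1}(1+r^2)^{-(\lambda+1)}$, and $1+r^2\le2$ — with no series rearrangement and no issue about the sign of a factor; it also makes the behaviour at $r=1$ transparent. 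What it costs is that you land on ${}_2F_1[\tfrac{n}2,\tfrac{n-1}2;\tfrac{n+1}2;-1]$ rather than the paper's ${}_2F_1[\tfrac12,1;\tfrac{3+n}2;-1]$, so you still owe the Euler-transformation-plus-contiguity computation identifying your $V(1)$ with the stated $C_n$, as well as a proof (or citation) of the Gegenbauer moment formula; both are routine, and your numerical anchors confirm there is no stray constant, but they should be written out for the proof to be complete.
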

\begin{proof}
By using spherical coordinates $\eta=(\eta_1,\dots, \eta_n)$ such that $\eta_n= \cos \theta$, where $\theta$ is the angle between the vector $x$ and $x_n$ axis, we obtain from \eqref{uext} that $$U(rN)= \frac{\Gamma\left[\frac{n}{2}\right]}{\sqrt{\pi } \Gamma\left[\frac{n-1}{2} \right]}\int_0^\pi \frac{(1-r^2)\sin^{n-2}\theta }{(1+r^2-2 r \cos\theta)^{n/2}} (\chi_{S^+}(x)-\chi_{S^-}(x))d\theta$$ and so
$$U(rN)=\frac{\Gamma\left[\frac{n}{2}\right]}{\sqrt{\pi } \Gamma\left[\frac{n-1}{2} \right]}\int_0^{\pi/2} \left(\frac{{(1-r^2)\sin^{n-2}\theta} }{(1+r^2-2 r \cos\theta)^{n/2}}-\frac{{(1-r^2)}\cos^{n-2}\theta }{(1+r^2+2 r \sin\theta)^{n/2}}\right)d\theta$$ or what can be written as
$$U(rN)=\frac{\Gamma\left[\frac{n}{2}\right]}{\sqrt{\pi } \Gamma\left[\frac{n-1}{2} \right]}\int_0^{\pi/2} \left(\frac{{(1-r^2)\sin^{n-2}\theta} }{(1+r^2-2 r \cos\theta)^{n/2}}-\frac{{(1-r^2)}\sin^{n-2}\theta }{(1+r^2+2 r \cos\theta)^{n/2}}\right)d\theta.$$
 Let $P=2r/(1+r^2)$. Then \[\begin{split}&\frac{{(1-r^2)\sin^{n-2}\theta} }{(1+r^2-2 r \cos\theta)^{n/2}}-\frac{{(1-r^2)}\sin^{n-2}\theta }{(1+r^2+2 r \cos\theta)^{n/2}}\\& = \frac{(1-r^2)}{(1+r^2)^{n/2}}\sum_{k=0}^\infty \left(\binom{-n/2}{k}((-1)^k-1)\cos^k\theta \sin^{n-2}\theta \right)P^k.\end{split}\]
 Since $$\int_0^{\pi/2} \cos^k\theta \sin^{n-2}\theta d\theta = \frac{\Gamma\left[\frac{1+k}{2}\right] \Gamma\left[\frac{1}{2} (-1+n)\right]}{2 \Gamma\left[\frac{k+n}{2}\right]},$$
 we obtain $$U(rN)= \frac{\Gamma\left[\frac{n}{2}\right]}{\sqrt{\pi } \Gamma\left[\frac{n-1}{2} \right]}
 \frac{(1-r^2)}{(1+r^2)^{n/2}}\sum_{k=0}^\infty \frac{\Gamma\left[\frac{1+k}{2}\right] \Gamma\left[\frac{n-1}{2} \right]}{2 \Gamma\left[\frac{k+n}{2}\right]}\binom{-n/2}{k}((-1)^k-1)P^k.$$
Hence $$U(r N) =r \left(1-r^2\right) \left(1+r^2\right)^{-1-\frac{n}{2}} \frac{2 \Gamma\left[1+\frac{n}{2}\right]}{\sqrt{\pi } \Gamma\left[\frac{1+n}{2}\right]} G(r),$$ where
$$G(r)={_{3}\mathrm{F}_{2}}\left[
1,\frac{2+n}{4},\frac{4+n}{4};
\frac{3}{2},\frac{1+n}{2}
; \frac{4 r^2}{\left(1+r^2\right)^2}\right].$$
By \cite[Eq.~3.1.8]{hyper} for $a=\frac{n}{2},\ b=\frac{1}{2} (-1+n),\ c=\frac{1}{2}$, we have that
$$G(r)=\frac{\left(1+r^2\right)^{1+\frac{n}{2}} {_4F_3}\left[\left\{\frac{n}{2},\frac{1}{2} (-1+n),\frac{1}{2},1+\frac{n}{4}\right\},\left\{\frac{n}{4},\frac{3}{2},\frac{1}{2}+\frac{n}{2}\right\},-r^2\right]}{1-r^2}.$$ So

$$U(r N) = r\frac{2 \Gamma\left[1+\frac{n}{2}\right]}{\sqrt{\pi } \Gamma\left[\frac{1+n}{2}\right]}{_4F_3}\left[\left\{\frac{n}{2},\frac{1}{2} (-1+n),\frac{1}{2},1+\frac{n}{4}\right\},\left\{\frac{n}{4},\frac{3}{2},\frac{1}{2}+\frac{n}{2}\right\},-r^2\right],$$ which can be written as

$$U(rN)= \frac{2 \Gamma\left[1+\frac{n}{2}\right]}{\sqrt{\pi } \Gamma\left[\frac{1+n}{2}\right]}r+\sum_{k=1}^\infty \frac{2 (-1)^k (4 k+n) \Gamma\left[k+\frac{n}{2}\right]}{(1+2 k) (-1+2 k+n) \sqrt{\pi } \Gamma[1+k] \Gamma\left[\frac{1}{2} (n-1)\right]}r^{2k+1}.$$
Thus $$\frac{\partial U(r N)}{\partial r} =\frac{2 \Gamma\left[1+\frac{n}{2}\right]}{\sqrt{\pi } \Gamma\left[\frac{1+n}{2}\right]}+\sum_{k=1}^\infty \frac{2 (-1)^k (4 k+n) \Gamma\left[k+\frac{n}{2}\right]}{ (-1+2 k+n) \sqrt{\pi } \Gamma[1+k] \Gamma\left[\frac{1}{2} (n-1)\right]}r^{2k}.$$
Since \[\begin{split}&\frac{2 (-1)^k (4 k+n) \Gamma\left[k+\frac{n}{2}\right]}{ (-1+2 k+n) \sqrt{\pi } \Gamma[1+k] \Gamma\left[\frac{1}{2} (n-1)\right]}\\&=\frac{(-1)^k 2^n \Gamma\left[1+\frac{n}{2}\right] \Gamma\left[k+\frac{n}{2}\right]}{\pi  k! \Gamma[n]}+\frac{2 (-1)^k (-2+n) \Gamma\left[k+\frac{n}{2}\right]}{(-1+2 k+n) \sqrt{\pi } \Gamma[k] \Gamma\left[\frac{1+n}{2}\right]}\end{split}\]
we obtain that
$$\frac{\partial U(r N)}{\partial r} = \frac{\Gamma\left[1+\frac{n}{2}\right]\left((1+r^2)^{-n/2} (1+n)-(n-2) r^2\, _2F_1\left[\frac{1+n}{2},\frac{2+n}{2},\frac{3+n}{2},-r^2\right]\right) }{\sqrt{\pi } \Gamma\left[\frac{3+n}{2}\right]},$$
which in view of the Kummer quadratic transformation, can be written in the form $$\frac{\partial U(r N)}{\partial r}= \frac{\Gamma\left[1+\frac{n}{2}\right](1+r^2)^{-n/2}  \left(1+n-(n-2) r^2 \, _2F_1\left[\frac{1}{2},1,\frac{3+n}{2},-r^2\right]\right)}{\sqrt{\pi }  \Gamma\left[\frac{3+n}{2}\right]}.$$
The function $$y _2F_1[1/2, 1, (3 + n)/2, -y]$$ increases in $y$. Namely its derivative is \[\begin{split}_2F_1[1/2, 2, (3 + n)/2, -y]&=\sum_{m=0}^\infty (-1)^m a(m)y^m\\&=\sum_{m=0}^\infty\frac{(-1)^m (1+m) \Gamma\left[\frac{1}{2}+m\right] \Gamma\left[\frac{3+n}{2}\right]}{\sqrt{\pi } \Gamma\left[\frac{3}{2}+m+\frac{n}{2}\right]} y^m.\end{split}\] Then  $a(m)>0$ and  $$\frac{a(m)}{a(m+1)}=\frac{(1+m) (3+2 m+n)}{(2+m) (1+2 m)}>1$$ because $1 + n + m n>0$, and so  $$_2F_1[1/2, 2, (3 + n)/2, -y] \ge \sum_{m=0}^\infty (a(2m)-a(2m+1))y^{2m}>0.$$ The conclusion is that $\frac{\partial U(r N)}{\partial r}$ is decreasing.
In particular
$$\frac{\partial U(r N)}{\partial r}\ge  \frac{\partial U(r N)}{\partial r}|_{r=1}.$$
For $r=1$ we have
$$\frac{\partial U(r N)}{\partial r}=C_n=\frac{  n! \left(1+n-(n-2)\, _2\mathrm{F}_1\left[\frac{1}{2},1,\frac{3+n}{2},-1\right]\right)}{2^{3 n/2} \Gamma\left[\frac{1+n}{2}\right] \Gamma\left[\frac{3+n}{2}\right]}.$$
\end{proof}

\begin{theorem}\label{Theo1}
If $u$ is a harmonic mapping of the unit ball into itself such that $u(0)=0$, then for $x\in B$  the following sharp inequality $$\frac{1-\|u(x)\|}{1-\|x\|}\ge C_n$$ holds.
\end{theorem}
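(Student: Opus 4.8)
The plan is to reduce everything to the one-dimensional radial problem and then feed in Lemma~\ref{granit}. The starting point is the harmonic Schwarz lemma \eqref{shar}: since $u(0)=0$, we have $\|u(x)\|\le U(\|x\|N)$. Writing $r=\|x\|$ this gives immediately
\[
1-\|u(x)\|\ge 1-U(rN),
\]
so it suffices to prove the scalar inequality $1-U(rN)\ge C_n(1-r)$ for $0\le r<1$, the denominator being $1-\|x\|=1-r$ in both ratios.

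Next I would record the boundary value $U(N)=\lim_{r\to1^-}U(rN)=1$. This holds because $U=\mathcal P[\chi_{S^+}-\chi_{S^-}]$ and the north pole $N$ lies in the interior of $S^+$, hence is a point of continuity of the boundary data; the Poisson integral therefore converges there to $\chi_{S^+}(N)-\chi_{S^-}(N)=1$. (Alternatively one reads it off the explicit power series for $U(rN)$ obtained inside the proof of Lemma~\ref{granit}.) Together with $U(0)=0$, which follows from the odd symmetry of the boundary data, this lets me write, using the fundamental theorem of calculus and $V(t)=\partial_t U(tN)$,
\[
1-U(rN)=U(N)-U(rN)=\int_r^1 V(t)\,dt.
\]

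Now Lemma~\ref{granit} asserts precisely that $V$ is decreasing on $[0,1]$ with $V(t)\ge V(1)=C_n$; integrating this pointwise bound gives $\int_r^1 V(t)\,dt\ge C_n(1-r)$. Chaining the three displays yields
\[
\frac{1-\|u(x)\|}{1-\|x\|}\ge\frac{1-U(rN)}{1-r}\ge C_n,
\]
which is the claimed inequality.

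Finally, for sharpness I would exhibit the extremal mapping $u(x)=U(x)N$. It is harmonic, satisfies $u(0)=0$, and maps $B$ into itself because $|U|\le1$; along the positive axis $\|u(rN)\|=U(rN)$, so the ratio equals the average $\frac{1}{1-r}\int_r^1V(t)\,dt$, which tends to $V(1)=C_n$ as $r\to1^-$ while staying strictly above $C_n$ for $r<1$. Hence $C_n$ cannot be improved. The only genuinely delicate point in the whole argument is the justification of $U(N)=1$ together with the integrability of $V$ up to the endpoint $r=1$; both follow from the explicit hypergeometric representation of $V$ established in Lemma~\ref{granit}, so once that lemma is in hand no real obstacle remains.
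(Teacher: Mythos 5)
Your argument is correct and follows essentially the same route as the paper: reduce to the radial majorant $U(rN)$ via the Schwarz lemma of Theorem~\ref{scp}, then invoke the monotonicity of $V=\partial_r U(rN)$ from Lemma~\ref{granit}; the paper phrases the final step via the mean value theorem on $[r,1]$ where you integrate the pointwise bound $V\ge C_n$, which is equivalent. Your explicit verification that $U(N)=1$ and the sharpness example $u(x)=U(x)N$ are sound additions that the paper leaves implicit (sharpness is only addressed there in the proof of Theorem~\ref{Theo2}).
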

\begin{proof}
From Theorem~\ref{scp}  we have that $\|u(x)\|\le U(rN)$ and so $$\frac{1-\|u(x)\|}{1-\|x\|}\ge \frac{1-|U(rN)|}{1-\|x\|}.$$

Further there is $\rho\in(r,1)$ such that  $$ \frac{1-U(rN)}{1-\|x\|}= \frac{\partial U(\rho N)}{\partial r},$$ which in view of Lemma~\ref{granit} is bigger that $C_n$. The proof is completed.
\end{proof}
\begin{theorem}\label{Theo2}
a)  If $u$ is a harmonic mapping of the unit ball {\bf into} itself such that $u(0)=0$, and for some $\|\zeta\|=1$ we have $\lim_{r \to 1} \|u(r \zeta)\|=1$ then  \begin{equation}\label{ewo}\liminf_{r\to 1^-}\left\|\frac{\partial u}{\partial \mathbf{n}}(r\zeta)\right\|\ge C_n.\end{equation}

b) If $u$ is a proper harmonic mapping of the unit ball {\bf onto} itself such that $u(0)=0$, then the following sharp inequality \begin{equation}\label{ewo}\liminf_{r\to 1^-}\left\|\frac{\partial u}{\partial \mathbf{n}}(r\zeta)\right\|\ge C_n,\ \  \|\zeta\|=1\end{equation} holds.
Here and in the sequel $\mathbf{n}$ is outward-pointing unit normal.
\end{theorem}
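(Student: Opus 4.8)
The plan is to derive both parts from the value inequality $v(rN)\le U(rN)$ and the monotonicity in Lemma~\ref{granit}, after reducing the vector-valued normal derivative to a scalar radial derivative. Since $\mathbf n$ is the outward normal, $\partial u/\partial\mathbf n(r\zeta)=\partial_r u(r\zeta)$. By rotational invariance (as in the proof of Theorem~\ref{scp}) I would assume $\zeta=N$. Because $u$ takes values in the closed ball and $\lim_{r\to1}\|u(rN)\|=1$, the radial limit $\omega:=\lim_{r\to1}u(rN)$ exists with $\|\omega\|=1$; the hypothesis forces $N$ to be a Lebesgue point of the boundary data of $u$. Setting $v(x)=\langle u(x),\omega\rangle$ gives a real harmonic function with $v(0)=\langle u(0),\omega\rangle=0$, $|v|\le1$, and $v(rN)\to1$, and the elementary bound $\|\partial_r u(rN)\|\ge\langle\partial_r u(rN),\omega\rangle=\partial_r v(rN)$ reduces \eqref{ewo} to the scalar claim $\liminf_{r\to1}\partial_r v(rN)\ge C_n$. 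The scalar harmonic Schwarz lemma \eqref{shar} applied to $v$ yields $v(rN)\le U(rN)$ for $0\le r<1$, with both sides tending to $1$.

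The heart of the matter is to upgrade the value comparison $v(rN)\le U(rN)$ to a radial-derivative comparison $\partial_r v(rN)\ge\partial_r U(rN)=V(r)$ for each fixed $r$; once this is in hand, Lemma~\ref{granit} gives $\partial_r v(rN)\ge V(r)\ge C_n$ for every $r$, and the $\liminf$ bound follows at once. I would attempt this through the Poisson representation: writing $v=\mathcal P[\phi]$ with $|\phi|\le1$ and $\int_S\phi\,d\sigma=0$ (the latter encoding $v(0)=0$), and using $\int_S\partial_r P(rN,\eta)\,d\sigma(\eta)=\partial_r 1=0$, one obtains $\partial_r v(rN)=\int_S\partial_r P(rN,\eta)\,\phi(\eta)\,d\sigma(\eta)$. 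Since $\partial_r P(rN,\eta)$ depends on $\eta$ only through $\eta_n$ and concentrates positively toward the pole $N$, one expects the admissible boundary data minimizing this functional—subject to $N$ being a Lebesgue point with value $1$, which is forced by $v(rN)\to1$—to be exactly $\chi_{S^+}-\chi_{S^-}$, for which $v=U$ and $\partial_r v(rN)=V(r)$.

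For part (b), properness of $u$ means $\|u(x)\|\to1$ as $\|x\|\to1^-$, so $\lim_{r\to1}\|u(r\zeta)\|=1$ for every $\zeta\in S$; part (a) then applies at each boundary point and delivers \eqref{ewo} for all $\zeta$. Sharpness I would establish by exhibiting proper harmonic self-maps of $B^n$ fixing the origin whose normal derivative at a boundary point tends to $C_n$, built from the extremal configuration $\chi_{S^+}-\chi_{S^-}$ underlying $U$ (this recovers the Heinz value $2/\pi$ when $n=2$); verifying that such a map can be made proper and onto is a secondary point of the construction.

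The main obstacle is exactly the passage from the value inequality to a genuine \emph{pointwise} lower bound on the radial derivative, i.e. obtaining a $\liminf$ rather than only a $\limsup$ or an averaged bound. A bare mean value argument applied to $v(rN)\le U(rN)$ controls only the backward difference quotients $\tfrac{1-v(rN)}{1-r}$, equivalently the average of $\partial_r v$ over $[r,1]$, and these give $\limsup_{r\to1}\partial_r v(rN)\ge C_n$, not the liminf. Moreover the normal derivative is a delicate limit: the pointwise limit $\lim_{r\to1}\partial_r P(rN,\eta)=-2^{1-n/2}(1-\eta_n)^{-n/2}$ fails to be integrable near $\eta=N$, so the positive concentration at $N$ must be balanced against the negative tail uniformly in $r$. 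Making the rearrangement argument rigorous—in particular using the Lebesgue-point constraint to exclude the competing extremal $\chi_{S^-}-\chi_{S^+}$ and to suppress oscillation of $\partial_r v(rN)$ as $r\to1$—is where the real work lies; the alternative Hopf-type comparison with $U$ would require $v\le U$ in a full neighborhood of $N$, which the ray estimate $v(x)\le U(\|x\|N)$ does not by itself provide.
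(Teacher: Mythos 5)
Your reduction of the vector inequality to a scalar radial derivative and your deduction of (b) from (a) match the paper, but the engine of your argument is both missing and, as stated, false. You propose to prove the pointwise comparison $\partial_r v(rN)\ge V(r)$ for \emph{each fixed} $r$ and then invoke Lemma~\ref{granit}. At $r=0$ this would force $\partial_r v(0)\ge V(0)$; but $\partial_r v(0)=n\int_{S}\eta_n\phi(\eta)\,d\sigma(\eta)\le n\int_{S}|\eta_n|\,d\sigma(\eta)=V(0)$, with equality only when $\phi=\chi_{S^+}-\chi_{S^-}$ almost everywhere. Perturbing this extremal datum symmetrically near the equator keeps $v(0)=0$ and $v(rN)\to 1$ yet makes $\partial_r v(0)<V(0)$ strictly, so the fixed-$r$ comparison fails; only an asymptotic version as $r\to 1$ could hold, and that is precisely the statement to be proved --- your ``rearrangement'' heuristic does not supply it, as you yourself concede. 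A secondary unjustified step is the existence of $\omega=\lim_{r\to 1}u(rN)$: the hypothesis $\|u(rN)\|\to 1$ does not prevent $u(rN)$ from wandering on the unit sphere, and the claim that $N$ must be a Lebesgue point of the boundary data does not follow. The paper sidesteps both issues by working directly with the scalar function $r\mapsto\|u(r\zeta)\|$.

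The paper's actual proof is the short route you set aside: Theorem~\ref{Theo1} gives $\frac{1-\|u(r\zeta)\|}{1-r}\ge C_n$, the mean value theorem applied to $r\mapsto\|u(r\zeta)\|$ converts this into a derivative bound at intermediate radii $\rho(r)\to 1$, and $\|\partial_r u(r\zeta)\|\ge\partial_r\|u(r\zeta)\|$ finishes. Your observation that this mean-value argument directly controls only difference quotients (hence yields the bound along a sequence of radii, a $\limsup$-type conclusion) is a fair criticism of that argument as written, but your proposal does not repair the defect --- it replaces a short argument with a claimed stronger pointwise inequality that is false away from the boundary and unproven near it. Finally, for sharpness the paper does concrete work that your sketch omits: it constructs explicit sphere homeomorphisms $f_m$ converging to $(0,\dots,0,\chi_{S^+}-\chi_{S^-})$ and takes $u_m=\mathcal{P}[f_m]$, which are proper harmonic self-maps fixing the origin and converging to $(0,\dots,0,U)$; appealing to ``the extremal configuration'' without such a construction leaves the sharpness claim unsupported.
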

\begin{proof}
Prove a). Then b) follows from a).
Let  $0<r<1$ and $x\in(r\zeta, \zeta)$. There is a $\rho\in(\|x\|,1)$ such that \begin{equation}\label{letw}\frac{1-\|u(x)\|}{1-r}= \frac{\partial \|u(r\zeta )\|}{\partial r}\bigg|_{r=\rho}.\end{equation} On the other hand
$$\left\|\frac{\partial u(r\zeta)}{\partial r}\right\|\ge \frac{\partial \|u(r\zeta)\|}{\partial r}.$$
Letting $\|x\|=r\to 1$, in view of Thereom~\ref{Theo1} and \eqref{letw}, we obtain that $$\liminf_{r\to 1} \left\|\frac{\partial u}{\partial \mathbf{n}}(r\zeta)\right\|\ge C_n.$$
To show that the inequality \eqref{ewo} is sharp, let $$h_m(x) =\left\{
                                                                 \begin{array}{ll}
                                                                   1-x/m, & \hbox{if $x\in(1/m,1]$;} \\
                                                                   (m-1)x, & \hbox{if $-1/m\le x\le 1/m$;} \\
                                                                   -1-x/m, & \hbox{if $x\in [-1,-1/m)$,}
                                                                 \end{array}
                                                               \right.$$ and define $$f_m(x_1,\dots,x_{n-1},x_n) = \frac{\sqrt{1-h_m(x_n)^2}}{\sqrt{1-x_n^2}}(x_1,\dots,x_{n-1},0)+(0,\dots,0, h_m(x_n)).$$ Then $f_m$ is a homeomorphism of the unit sphere onto itself, such that $$\lim_{m\to \infty} f_m(x) = (0,\dots,0,\chi_{S^+}(x)-\chi_{S^-}(x)).$$ Further  $u_m(x) = \mathcal{P}[f_m](x)$ is a harmonic mapping of the unit ball onto itself such that $\lim_{\|x\|\to 1}\|u_m(x)\|=1$. Thus $u_m$ is proper. Moreover $u_m(0)=0$ and   $\lim_{m\to \infty} u_m(x) = (0,\dots, 0, U(x)).$ This implies the fact that the constant $C_n$ is sharp.
\end{proof}
\begin{remark}

The following table shows  first few constants $C_n$ and related functions

\begin{tabular}{|c|c|c|c|}
  \hline
  $n$&  $u(rN)$ & $\partial_r u(rN)$ & $C_n$  \\
     \hline
 $2$ & $\frac{4\arctan(r)}{\pi }$ & $\frac{4}{\pi(1+r^2)},$ & $\frac{2}{\pi}$  \\
 \hline
  $3$ & $\frac{-1+r^2+\sqrt{1+r^2}}{r \sqrt{1+r^2}}$ & $\frac{1-\sqrt{1+r^2}-r^2 \left(-3+\sqrt{1+r^2}\right)}{r^2 \left(1+r^2\right)^{3/2}}$ &$\sqrt{2}-1$ \\
   \hline
  $4$ & $\frac{2 r \left(-1+r^2\right)+2\left(1+r^2\right)^2 \arctan r}{\pi  r^2 \left(1+r^2\right)}$& $ \frac{4 \left(r+3 r^3-\left(1+r^2\right)^2 \arctan r\right)}{\pi  r^3 \left(1+r^2\right)^2}$ & $\frac{4-\pi }{\pi }$ \\
  \hline
\end{tabular}

\end{remark}

\end{document}